\definecolor{celadon}{rgb}{0.67, 0.88, 0.69}
\colorlet{kollane}{LightGoldenrod1}
\colorlet{roheline}{celadon}
\colorlet{sinine}{LightSkyBlue2}
\newcommand{\N}{{\mathbb N}}
\newcommand{\ba}{{ba(\widetilde{M})}}
\DeclareMathOperator{\diam}{diam}
\DeclareMathOperator{\Lip}{Lip}
\newtheorem{thm}{Theorem}[section]
\newtheorem{lemma}[thm]{Lemma}
\newtheorem{prop}[thm]{Proposition}
\theoremstyle{definition}
\newtheorem{defn}[thm]{Definition}
\newtheorem{ex}[thm]{Example}
\newtheorem{qu}[thm]{Question}
\theoremstyle{remark}
\numberwithin{equation}{section}
\begin{document}

\title[Separating diameter $2$ properties from their weak$^*$ counterparts]{Separating diameter two properties\\ from their weak-star counterparts\\ in spaces of Lipschitz functions}%

\author{Rainis Haller}
\address{{\rm(R. Haller)} Institute of Mathematics and Statistics, University of Tartu, Narva mnt 18, 51009, Tartu, Estonia}%
\email{rainis.haller@ut.ee}

\author{Jaan Kristjan Kaasik}
\address{{\rm(J. K. Kaasik)} Institute of Mathematics and Statistics, University of Tartu, Narva mnt 18, 51009, Tartu, Estonia}%
\email{jaan.kristjan.kaasik@ut.ee}

\author{Andre Ostrak}
\address{{\rm(A. Ostrak)} Department of Mathematics, University of  Agder, Postbox 422 4604 Kristiansand, Norway.}%
\email{andre.ostrak@uia.no}%

\thanks{}%
\subjclass{Primary 46B04; Secondary 46B20}%
\keywords{Spaces of Lipschitz functions, Lipschitz-free spaces, diameter two properties, de Leeuw's transform}%


\begin{abstract}
We address some open problems concerning Banach spaces of real-valued Lipschitz functions. Specifically, we prove that the diameter two properties differ from their weak-star counterparts in these spaces. In particular, we establish the existence of dual Banach spaces lacking the symmetric strong diameter two property but possessing its weak-star counterpart. We show that there exists an octahedral Lipschitz-free space whose bidual is not octahedral. Furthermore, we prove that the Banach space of real-valued Lipschitz functions from any infinite subset of $\ell_1$ possesses the symmetric strong diameter two property. These results are achieved by introducing new sufficient conditions, providing new examples and clarifying the status of known ones.

\end{abstract}
\maketitle

\section{Introduction}
We start by fixing some notation. Let $X$ be a nontrivial real Banach space. 
 We denote the closed unit ball, the unit sphere, and the dual space of $X$ by $B_X$, $S_X$, and $X^*$, respectively. 
A \emph{slice} of $B_{X}$ is a set of the form
\[
S(x^*, \alpha)\coloneq \{x\in B_{X}\colon x^*(x)>1-\alpha\},
\]
where $x^*\in S_{X^*}$ and $\alpha>0$. 
If $X$ is a dual space, then slices whose defining functional comes from the predual of $X$ are called weak$^*$ slices.

Let $M$ be a pointed metric space, i.e. a metric space with a fixed point $0$, with a distance function $d$. We denote by $\Lip_0(M)$ the Banach space of all Lipschitz functions $f\colon M\rightarrow\mathbb{R}$ satisfying $f(0)=0$ equipped with the norm
\[
    \|f\|=\sup\Big\{\frac{|f(x)-f(y)|}{d(x,y)}\colon x, y\in M,\ x\neq y\Big\}.
\]
Let $\delta\colon M\rightarrow \Lip_0(M)^*$ be the canonical isometric embedding of $M$ into $\Lip_0(M)^*$, which is given by $x\mapsto \delta_x$, where $\delta_x(f)=f(x)$. 
The norm closed linear span of $\delta(M)$ in $\Lip_0(M)^*$ is called the \emph{Lipschitz-free space} over $M$ and is denoted by $\mathcal{F}(M)$. It is well known that
$\mathcal{F}(M)^*= \Lip_0(M)$. For $x,y\in M$ with $x\neq y$, 
we define a \emph{molecule} $m_{x,y}$ as a norm one element $\tfrac{\delta_x-\delta_y}{d(x,y)}$ in $\mathcal{F}(M)$.

Set
\begin{equation*}\label{eq: Gamma:=(M times M) setminus ((x,x): x in M)}
    \widetilde{M} = \big\{ (x,y) \colon x,y\in M,\ x\neq y\big\}.
\end{equation*}
For a subset $A$ of $M$, we denote
\begin{equation*}\label{eq: GammaA}
    \Gamma_A = \big\{(x,y)\in \widetilde{M}\colon x\in A\text{ or } y\in A\big\}.
\end{equation*}

The de Leeuw's transform (see \cite[Definition 2.31 and Theorem 2.35]{MR3792558}) is a linear isometry defined by
\begin{align*}
    \Lip_0(M) \ni f\longmapsto \tilde{f} \in \ell_\infty(\widetilde{M}), \quad \text{where }\tilde{f}(x,y)= \frac{f(x)-f(y)}{d(x,y)}.
\end{align*}
As a side note, one may also regard the de Leeuw's transform as embedding $\Lip_0(M)$ into the Banach space of bounded continuous functions on $\widetilde M$ or into the Banach space of continuous functions on the Stone--\v Cech compactification of $\widetilde M$ (see \cite[p.~102]{MR3792558}).

Given a set $E$, we denote by $ba(E)$ the Banach space of bounded and finitely additive signed measures on the power set of $E$, where the norm of a measure $\mu\in ba(E)$ is its total variation $|\mu|(E)$.
Recall that $\ell_\infty(\widetilde M)^* = \ba$. Thus, for every $F\in \Lip_0(M)^*$, there exists $\mu\in \ba$ so that $\|F\|=|\mu|(\widetilde{M})$ and
\begin{equation*}
    F(f)=\int_{\widetilde{M}} \tilde{f} d\mu \qquad \text{for all $f\in \Lip_0(M)$.}
\end{equation*}

We study the following properties (\cite{MR3098474, MR4023351}) in spaces of Lipschitz functions. A Banach space $X$ has the
\begin{enumerate}
\label{def: d2p-s}
\item \emph{slice diameter $2$ property} (briefly, \emph{slice-D$2$P}) if every slice of $B_X$ has dia\-meter $2$;
\item \emph{diameter $2$ property} (briefly, \emph{D$2$P}) if every nonempty relatively weakly open subset
of $B_X$ has diameter $2$;
\item \emph{strong diameter $2$ property} (briefly, \emph{SD$2$P}) 
if every convex combination of slices of $B_X$ has diameter $2$, 
i.e. the diameter of $\sum_{i=1}^n \lambda_i S_i$ is $2$ 
whenever $n\in\mathbb N$, $\lambda_1,\dotsc,\lambda_n\geq 0$ with $\sum_{i=1}^n\lambda_i=1$, 
and $S_1,\dotsc,S_n$ are slices of $B_X$;
\item\emph{symmetric strong diameter $2$ property} (briefly, \emph{SSD$2$P}) if 
for every $n\in\mathbb N$, every family $\{S_1,\dotsc, S_n\}$ of slices of $B_X$, and every $\varepsilon>0$, 
there exist $f_1\in S_1,\dotsc, f_n\in S_n$, and $g\in B_{X}$ with $\|g\|>1-\varepsilon$ 
such that $f_i\pm g\in S_i$ for every $i\in \{1,\ldots,n\}$.
\end{enumerate}
If $X$ is a dual space, then we also consider the weak-star counterparts of these diameter two properties ($w^
*$-slice D$2$P, $w^*$-D$2$P, $w^*$-SD$2$P, and $w^*$-SSD$2$P), where slices and weakly open subsets in the above definitions are replaced by weak$^*$ slices and weak$^*$ open subsets, respectively. 
 It is known that $(4)\Rightarrow (3) \Rightarrow (2) \Rightarrow (1)$ and, with the exception of SSD$2$P, that these properties are distinct from their weak-star counterparts in dual Banach spaces. It has remained open whether SSD$2$P and $w^*$-SSD$2$P are equivalent for dual spaces (see {\cite[Question 6.2]{MR3917942}}).

Diameter two properties of $\Lip_0(M)$ have been studied
in \cite{MR3985517, MR3917942, HOP2022, zbMATH05159098,  MR4093788, MR4026495, MR4233633, MR3803112}. We recall some useful results but refer to the introduction of \cite{HOP2022} for a more detailed background on the topic. 

By \cite[Theorem 3.1]{MR3803112}, the space $\Lip_0(M)$ has the $w^*$-SD$2$P if and only if the metric space $M$ has the LTP.
\begin{defn}[{\cite[Theorem 3.1]{MR3803112}}]
    The metric space $M$ has the \emph{LTP} if, given a finite subset $N$ of $M$ and $\varepsilon>0$, there exist $u,v\in M$ with $u\neq v$ satisfying 
    \begin{equation}\label{ineq: LTP}
        (1-\varepsilon)\big(d(x,y)+d(u,v)\big)\leq d(x,u)+d(y,v)
    \end{equation}
    for all $x,y\in N$.
\end{defn}

By \cite[Theorem 2.1]{MR4026495}, the space $\Lip_0(M)$ has the $w^*$-SSD$2$P if and only if the metric space $M$ has the SLTP.
\begin{defn}[{\cite[Definition 1.3]{MR4026495}}]
    The metric space $M$ has the \emph{SLTP} if, given a finite subset $N$ of $M$ and $\varepsilon>0$, there exist $u,v\in M$ with $u\neq v$ satisfying \eqref{ineq: LTP} and
\begin{equation}\label{ineq: SLTP}
    \begin{aligned}
         &(1-\varepsilon)\big(2d(u,v)+d(x,y)+d(z,w)\big)\\
        &\qquad\qquad \qquad \leq d(x,u)+d(y,u)+d(z,v)+d(w,v)
     \end{aligned}
\end{equation}
for all $x,y,z,w\in N$.
\end{defn}

In \cite{HOP2022}, Haller, Ostrak, and Põldvere introduced stronger versions of the LTP and the SLTP called the seq-LTP and the seq-SLTP, respectively. 
\begin{defn}[{\cite[Definition 3.2]{HOP2022}}]\label{seq-LTP}
    The metric space $M$ has the \emph{seq-LTP} if there exist pairwise disjoint subsets $A_1,A_2,\dotsc$ of $M$ and elements $u_m,v_m\in A_m$ with $u_m\neq v_m$, $m\in \mathbb{N}$, satisfying 
\begin{equation}\label{ineq: seq-LTP}
    (1-\varepsilon)\big(d(x,y)+d(u_m,v_m)\big)\leq d(x,u_m)+d(y,v_m).
\end{equation}
for all $m\in \mathbb{N}$ and $x,y\in M\setminus A_m$.
\end{defn}

\begin{defn}[{\cite[Definition 2.2]{HOP2022}}]\label{seq-SLTP}
    The metric space $M$ has the \emph{seq-SLTP} if there exist pairwise disjoint subsets $A_1,A_2,\dotsc$ of $M$ and elements $u_m,v_m\in A_m$ with $u_m\neq v_m$, $m\in \mathbb{N}$, satisfying \eqref{ineq: seq-LTP} and
\begin{equation}\label{ineq: seq-SLTP}
    \begin{aligned}
         &(1-\varepsilon)\big(2d(u_m,v_m)+d(x,y)+d(z,w)\big)\\
        &\qquad\qquad \qquad \leq d(x,u_m)+d(y,u_m)+d(z,v_m)+d(w,v_m)
     \end{aligned}
\end{equation}
for all $m\in \mathbb{N}$ and $x,y,z,w\in M\setminus A_m$.
\end{defn}

\noindent By \cite[Theorem~3.3 (resp. Theorem~2.3)]{HOP2022}, if $M$ has the seq-LTP (resp. seq-SLTP), then $\Lip_0(M)$ has the SD$2$P (resp. SSD$2$P). It remained uncertain whether the seq-LTP (resp. seq-SLTP) of $M$ is necessary for $\Lip_0(M)$ to have the SD$2$P (resp. SSD$2$P).
Moreover, the authors of \cite{HOP2022} showed that there exists a metric space with the seq-LTP but not the SLTP, which proved that the SD$2$P and the SSD$2$P are different properties for the spaces of Lipschitz functions. Additionally, they constructed a metric space $M$ with the SLTP but without the seq-LTP (see Example \ref{Example2.7HOP} below), leaving open the question of whether $\Lip_0(M)$ has the SD$2$P or the SSD$2$P.
Furthermore, it remained open whether any of the diameter $2$ properties coincide with their weak-star counterparts in spaces of Lipschitz functions. 

We now outline the main results and the layout of the paper.

In section 2, we give new sufficient conditions for the space $\Lip_0(M)$ to have the SD$2$P and the SSD$2$P, which are strictly weaker than the seq-LTP and the seq-SLTP for $M$, respectively.

In section 3, we show that for the metric space $M$ constructed in \cite[Example 2.7]{HOP2022}, the space $\Lip_0(M)$ does not have the SSD$2$P. This observation establishes that the $w^*$-SSD$2$P differs from the SSD$2$P in spaces of Lipschitz functions, thereby answering {\cite[Question 6.2]{MR3917942}}.
In addition, we construct a metric space $M$ with the SLTP for which $\Lip_0(M)$ does not have the slice-D$2$P, thereby showing that the diameter $2$ properties are all different from their weak-star counterparts in spaces of Lipschitz functions. This also resolves \cite[Problem 1.1]{MR4093788} (see also \cite[p. 1681]{MR3985517}) as the Lipschitz-free space $\mathcal{F}(M)$ is octahedral but its bidual $\mathcal{F}(M)^{**}=\Lip_0(M)^*$ is not by \cite[Theorem 2.1 and Corollary 2.2]{MR3150166} (see also \cite[Theorem 2.3 and Theorem 2.4]{MR3346197}).

In section 4, we show that there exists an infinite metric subspace $M$ of $\ell_1$ without the seq-LTP. Nevertheless, we establish that the space $\Lip_0(M)$ has the SSD$2$P for all infinite metric subspaces $M$ of $\ell_1$. Previous studies have shown that all such metric subspaces $M$ have the LTP (see \cite[Proposition 4.7]{MR3803112}) and even the SLTP (see \cite[Example 3.3]{MR4026495}), indicating that $\Lip_0(M)$ has the $w^*$-SD$2$P and the $w^*$-SSD$2$P, respectively.

In section 5, we list some open problems.

\section{Sufficient conditions for strong diameter two properties in spaces of Lipschitz functions.}
In this section, we introduce new sufficient conditions for the space of Lipschitz functions to have the SD$2$P or the SSD$2$P, which will be convenient to use in the later sections. 

Let $M$ be a pointed metric space. We start by recalling the following lemmas from \cite{HOP2022}.

\begin{lemma}[{\cite[Lemma 3.1]{HOP2022}}]\label{lem: main lemma for detecting the SD2P}
Suppose that, whenever $0<\delta<1$, $n\in\N$, $h_1,\dotsc,h_n\in\Lip_0(M)$ with $\|h_i\|\leq 1-\delta$ for every $i\in\{1,\dotsc,n\}$, 
and $\mu\in \ba$ with only non-negative values, 
there exist a subset $A$ of $M$, elements $u,v\in A$ with $u\neq v$,
and functions $f_1,\dotsc,f_n\in B_{\Lip_0(M)}$ satisfying
\begin{itemize}
\item
$\mu(\Gamma_{A})<\delta$;
\item
$f_i|_{M\setminus A}=h_i|_{M\setminus A}$ for every $i\in\{1,\dotsc,n\}$;
\item
$f_i(u)-f_i(v)\geq (1-\delta)d(u,v)$ for every $i\in\{1,\dotsc,n\}$.
\end{itemize}
Then the space $\Lip_0(M)$ has the SD$2$P.
\end{lemma}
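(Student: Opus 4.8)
The plan is to deduce the SD$2$P directly from its definition by showing that any convex combination $\sum_{i=1}^n \lambda_i S_i$ of slices of $B_{\Lip_0(M)}$ has diameter $2$. Since $\Lip_0(M)=\F(M)^*$, each slice $S_i$ can be taken of the form $S_i = S(\gamma_i,\alpha_i)$ with $\gamma_i\in S_{\F(M)}$; by density of finitely-supported molecules, we may approximate each $\gamma_i$ by an element of $\spanop\delta(M)$, so after shrinking the $\alpha_i$ slightly we may assume each defining functional is supported on a common finite subset $N\subseteq M$. Fix $\eps>0$; we seek $f_i, g_i\in S_i$ with $\norm{\sum_i\lambda_i f_i - \sum_i\lambda_i g_i}$ close to $2$, which (since each $\lambda_i$ factor is controlled) it suffices to obtain a single $h\in B_{\Lip_0(M)}$ with $\norm h$ close to $1$ and $f_i + h, f_i - h \in S_i$; then $\sum\lambda_i(f_i+h)$ and $\sum\lambda_i(f_i-h)$ lie in the convex combination and differ by $2h$.

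The core step is to invoke Lemma~\ref{lem: main lemma for detecting the SD2P} — but that lemma IS the statement, so instead the plan is to prove this proposition by verifying its hypothesis implies the abstract SD$2$P criterion. Concretely: pick $f_i^0\in S_i$ with $\gamma_i(f_i^0) > 1-\alpha_i$, set $\delta$ small relative to $\min_i(\gamma_i(f_i^0)-(1-\alpha_i))$ and to $\eps$, and apply the hypothesis with $h_i \coloneq (1-\delta)f_i^0$ (so $\norm{h_i}\le 1-\delta$) and $\mu$ a suitable non-negative measure in $\ba$ representing, via de Leeuw's transform, the functional data needed to control $\gamma_i$ on the slices — more precisely, $\mu$ should dominate the total variation of the (finitely supported) measures representing $\gamma_1,\dots,\gamma_n$, so that $\mu(\Gamma_A)<\delta$ forces $\gamma_i$ to barely notice the modification on $A$. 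The hypothesis then yields $A$, $u,v\in A$, and $f_1,\dots,f_n\in B_{\Lip_0(M)}$ agreeing with $h_i$ off $A$ and with $f_i(u)-f_i(v)\ge(1-\delta)d(u,v)$.

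From this output I would build the perturbation: let $g\coloneq \rho\, m_{u,v}^{\flat}$ be (a Lipschitz extension to $M$ of) the function that is supported near $\{u,v\}$, equals roughly $\rho\, d(u,v)$ at $u$ and $0$ at $v$, with $\rho<1$, chosen so that $f_i\pm g$ still has norm $\le 1$ — this uses $f_i(u)-f_i(v)\ge(1-\delta)d(u,v)$ to absorb the bump, together with standard Lipschitz gluing estimates (the McShane-type extension) on the set $A$ where $f_i=h_i$ has slack $\delta$. Then $\gamma_i(f_i\pm g)$ differs from $\gamma_i(h_i)=(1-\delta)\gamma_i(f_i^0)$ by at most a quantity bounded by $\mu(\Gamma_A)\cdot\norm{\cdots}<\delta$ plus a term from $g$; choosing constants appropriately keeps $f_i\pm g\in S_i$. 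Finally $\sum_i\lambda_i(f_i+g)$ and $\sum_i\lambda_i(f_i-g)$ lie in $\sum_i\lambda_i S_i$ and differ by $2g$, which has norm $\ge \rho$ close to $1$, so the diameter is $\ge 2\rho$, and letting the parameters go to their limits gives diameter $2$.

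The main obstacle I anticipate is the bookkeeping that links the abstract finitely-additive measure $\mu$ on $\widetilde M$ to the slice functionals $\gamma_i\in S_{\F(M)}$: one must produce a \emph{single} non-negative $\mu\in\ba$ whose smallness on $\Gamma_A$ simultaneously controls all the $\gamma_i$, and verify that $\mu(\Gamma_A)<\delta$ genuinely implies $\abs{\gamma_i(f_i)-\gamma_i(h_i)}$ is small given that $f_i$ and $h_i$ differ only on $A$ (hence $\tilde f_i$ and $\tilde h_i$ differ only on $\Gamma_A$). A clean way is to represent each $\gamma_i$ by a finitely supported measure $\nu_i$ on $\widetilde M$ with $\abs{\nu_i}(\widetilde M)\le 1$ and set $\mu\coloneq \sum_i \abs{\nu_i}$; then $\abs{\gamma_i(f_i-h_i)} = \abs{\int_{\Gamma_A}(\tilde f_i-\tilde h_i)\,d\nu_i} \le 2\,\abs{\nu_i}(\Gamma_A)\le 2\mu(\Gamma_A)<2\delta$. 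The secondary technical point is the Lipschitz extension step guaranteeing $\norm{f_i\pm g}\le 1$: this is where the condition $f_i(u)-f_i(v)\ge(1-\delta)d(u,v)$ and the freedom in the shape of $g$ (a suitably flattened molecule vanishing outside a small ball around $\{u,v\}$, reconciled with $h_i$'s $\delta$-slack) must be combined carefully, but this is the standard mechanism already used in \cite{HOP2022} and should go through with routine estimates.
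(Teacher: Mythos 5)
Your proposal breaks down at the very first reduction. You assume that each slice $S_i$ of $B_{\Lip_0(M)}$ may be taken of the form $S(\gamma_i,\alpha_i)$ with $\gamma_i\in S_{\mathcal F(M)}$, approximable by finitely supported elements of $\spanop\delta(M)$. That is only true for \emph{weak$^*$} slices; the SD$2$P concerns slices defined by arbitrary functionals $F_i\in S_{\Lip_0(M)^*}=S_{\mathcal F(M)^{**}}$, and such functionals are in general not norm-approximable by elements of the predual. With your reduction you would at best prove that the hypothesis implies the $w^*$-SD$2$P, which is a strictly weaker conclusion --- indeed, the main point of this very paper (Section 3) is that the norm and weak-star diameter two properties genuinely differ in $\Lip_0(M)$. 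The role of the arbitrary non-negative $\mu\in ba(\widetilde M)$ in the hypothesis is precisely to handle general functionals: via de Leeuw's transform and $\ell_\infty(\widetilde M)^*=ba(\widetilde M)$, each $F_i$ is represented by some $\mu_i\in ba(\widetilde M)$ with $\|F_i\|=|\mu_i|(\widetilde M)$, one feeds $\mu=\sum_i|\mu_i|$ into the hypothesis, and then $f_i=h_i$ off $A$ gives $|F_i(f_i)-F_i(h_i)|=\bigl|\int_{\Gamma_A}(\tilde f_i-\tilde h_i)\,d\mu_i\bigr|\le 2\mu(\Gamma_A)<2\delta$, so $f_i$ stays in $S_i$. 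Your closing remark about ``finitely supported measures $\nu_i$'' repeats the same error; the measures representing general $F_i$ are merely finitely additive, not finitely supported, and the estimate must be run for them.

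The second gap is the separation mechanism. You plan to produce a single bump $g$ with $\|g\|$ close to $1$ and $f_i\pm g\in S_i$; but that is the SSD$2$P mechanism (the companion Lemma for the SSD$2$P has the data $g$ with $\|f_i\pm g\|\le 1$ built into its \emph{hypothesis}), and it cannot be extracted from the present hypothesis: since $f_i$ already satisfies $f_i(u)-f_i(v)\ge(1-\delta)d(u,v)$, the pair $(u,v)$ is nearly saturated, so no nontrivial $g$ supported near $\{u,v\}$ can satisfy both $\|f_i+g\|\le1$ and $\|f_i-g\|\le1$; the ``$\delta$-slack'' of $h_i$ lives off $A$, not where $g$ would live. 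The way the given hypothesis is actually used for the SD$2$P is asymmetric: apply it to the family $\{h_1,\dots,h_n,-h_1,\dots,-h_n\}$ (with $h_i=(1-\delta)g_i$ for chosen $g_i\in S_i$), obtaining $f_i$ and $f_i'$ which agree with $h_i$ and $-h_i$, respectively, off $A$ and both increase along $(u,v)$ by at least $(1-\delta)d(u,v)$. Then $f_i\in S_i$ and $-f_i'\in S_i$ by the $ba$-estimate above, and evaluating the difference of the two convex combinations at the molecule $m_{u,v}$ gives $\bigl\|\sum_i\lambda_i f_i-\sum_i\lambda_i(-f_i')\bigr\|\ge\sum_i\lambda_i\,\frac{(f_i+f_i')(u)-(f_i+f_i')(v)}{d(u,v)}\ge 2(1-\delta)$, which yields diameter $2$. (For what it is worth, the paper itself does not reprove this lemma; it quotes it from \cite[Lemma 3.1]{HOP2022}.)
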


\begin{lemma}[{\cite[Lemma 2.1]{HOP2022}}]\label{lem: main lemma for detecting the SSD2P}
Suppose that, whenever $0<\delta<1$, $n\in\N$,
$h_1,\dotsc,h_n\in\Lip_0(M)$ with $\|h_i\|\leq 1-\delta$ for every $i\in\{1,\dotsc,n\}$, and
$\mu\in \ba$ with only non-negative values,
there exist a subset $A$ of $M$
and functions $f_1,\dotsc,f_n,g\in\Lip_0(M)$ satisfying
\begin{itemize}
\item
$\mu(\Gamma_{A})<\delta$;
\item
$f_i|_{M\setminus A}=h_i|_{M\setminus A}$ for every $i\in\{1,\dotsc,n\}$;
\item
$g|_{M\setminus A}=0$ and $\|g\|\geq 1-\delta$;
\item
$\|f_i\pm g\|\leq1$ for every $i\in\{1,\dotsc,n\}$.
\end{itemize}
Then the space $\Lip_0(M)$ has the SSD$2$P.
\end{lemma}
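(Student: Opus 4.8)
The plan is to prove Lemma~\ref{lem: main lemma for detecting the SSD2P} (the final statement) by showing that the hypothesis supplies, for any finite family of slices of $B_{\Lip_0(M)}$ and any $\eps>0$, the functions $f_1,\dots,f_n$ and the ``perturbation'' $g$ witnessing the SSD$2$P. First I would take slices $S(F_1,\alpha_1),\dots,S(F_n,\alpha_n)$ of $B_{\Lip_0(M)}$, with $F_i\in S_{\F(M)}$, and fix $\eps>0$. By a standard density/perturbation argument I would replace each $F_i$ by a finitely supported element of $\F(M)$, i.e.\ one represented by a molecule-type measure; more to the point, I would pick $h_i\in S(F_i,\alpha_i)$ with $\|h_i\|\le 1-\delta$ for a small $\delta>0$ (this is possible because slices of the ball of an infinite-dimensional dual space contain points of norm bounded away from $1$ — or, more elementarily, scale a chosen point of the slice slightly). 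The key point is to also witness $F_i$ through de Leeuw's transform: writing $F_i(f)=\int_{\widetilde M}\tilde f\,d\mu_i$ with $\mu_i\in\ba$ and $|\mu_i|(\widetilde M)=1$, I would then apply the hypothesis to the \emph{single} non-negative measure $\mu\coloneq\sum_{i=1}^n |\mu_i|$ (suitably normalised) together with these $h_1,\dots,h_n$ and a $\delta$ chosen small relative to $\eps$ and the $\alpha_i$.

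The hypothesis then yields a subset $A\subseteq M$ and functions $f_1,\dots,f_n,g\in\Lip_0(M)$ with $\mu(\Gamma_A)<\delta$, $f_i|_{M\setminus A}=h_i|_{M\setminus A}$, $g|_{M\setminus A}=0$, $\|g\|\ge 1-\delta$, and $\|f_i\pm g\|\le 1$. The next step is to check that each $f_i$ (and $f_i\pm g$) lies in the slice $S(F_i,\alpha_i)$. Since $f_i$ and $h_i$ agree off $A$, and $g$ vanishes off $A$, the de Leeuw transforms $\widetilde{f_i\pm g}$ and $\widetilde{h_i}$ agree outside $\Gamma_A$ — the only pairs $(x,y)\in\widetilde M$ where $\tilde f$ can see the difference are those with $x\in A$ or $y\in A$. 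Hence
\[
|F_i(f_i\pm g)-F_i(h_i)|=\Bigl|\int_{\Gamma_A}\bigl(\widetilde{f_i\pm g}-\widetilde{h_i}\bigr)\,d\mu_i\Bigr|\le 2\,|\mu_i|(\Gamma_A)\le 2\,\mu(\Gamma_A)\cdot C<2C\delta,
\]
where $C$ absorbs the normalisation constant between $\mu$ and the $|\mu_i|$. Choosing $\delta$ small enough that $2C\delta<\alpha_i-(1-h_i\text{-margin})$ for all $i$ — concretely, so that $F_i(h_i)-2C\delta>1-\alpha_i$ — gives $f_i\pm g\in S(F_i,\alpha_i)$, and $\|g\|>1-\eps$ after arranging $\delta<\eps$. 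This is exactly the definition of the SSD$2$P.

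The main obstacle, and the place requiring genuine care, is the bookkeeping that links the \emph{single} non-negative measure $\mu$ fed into the hypothesis to the \emph{individual} (signed, norm-one) measures $\mu_i$ representing the slice functionals: one must ensure $|\mu_i|(\Gamma_A)$ is small for every $i$ simultaneously, which is why $\mu$ must dominate all the $|\mu_i|$; and one must fix the order of quantifiers correctly — $\delta$ is chosen first, depending on $\eps$ and on the slice parameters $\alpha_i$ and on the margins $F_i(h_i)-(1-\alpha_i)>0$, and only then is the hypothesis invoked. A secondary subtlety is confirming that $h_i$ with $\|h_i\|\le 1-\delta$ can indeed be found inside an arbitrary slice; this follows because $\Lip_0(M)$ is an infinite-dimensional dual space so its sphere is not norm-dense in any slice, but one should note that the lemma as stated already presupposes such $h_i$ are the relevant test functions, so in fact one reduces the SSD$2$P to slices \emph{and} to these particular perturbed centres in one stroke. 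Everything else — the norm estimate on $\widetilde{f_i\pm g}-\widetilde{h_i}$, the passage through $\ell_\infty(\widetilde M)^*=\ba$, and the final verification of the SSD$2$P definition — is routine once the constants are pinned down.
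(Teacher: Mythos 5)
Your proposal is correct and follows essentially the same route as the proof of this lemma in \cite{HOP2022} (the present paper only cites the statement without reproving it): shrink a point of each slice to obtain $h_i$ with $\|h_i\|\le 1-\delta$ while staying in the slice, represent each slice functional as $F_i(f)=\int_{\widetilde M}\tilde f\,d\mu_i$ with $|\mu_i|(\widetilde M)=1$, apply the hypothesis to the single non-negative measure $\mu=\sum_i|\mu_i|$, and use that $\widetilde{f_i\pm g}$ and $\widetilde{h_i}$ agree off $\Gamma_A$ so that slice membership survives up to an error of order $\mu(\Gamma_A)<\delta$. The only blemish is the opening aside about replacing $F_i$ by a finitely supported element of $\mathcal F(M)$: that reduction is neither available for general $F_i\in\Lip_0(M)^*$ nor needed, since the $\ba$ representation you then invoke already applies to every functional on $\Lip_0(M)$, as recalled in the paper's introduction.
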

\noindent
We use these lemmas to define new sufficient conditions for $\Lip_0(M)$ to have the SD$2$P and the SSD$2$P.

First, we give a new sufficient condition for the space $\Lip_0(M)$ to have the SD$2$P. We use this condition in Example \ref{Example2.7HOP} to show that the space $\Lip_0(M)$ has the SD$2$P, where $M$ is the metric space from \cite[Example 2.7]{HOP2022} lacking the seq-LTP.

\begin{defn}[cf. {\cite[p.~572]{MR3803112}} and {\cite[Defintion 3.2]{HOP2022}}]\label{def:FLTP}
    We say that $M$ has the \emph{function LTP} (\emph{FLTP}) if, given $\mu\in\ba$ with only non-negative values, $\varepsilon>0$, $n\in \mathbb{N}$, and $f_1,\ldots,f_n\in B_{\Lip_0(M)}$, there exist a subset $A$ of $M$ with $\mu(\Gamma_A)<\varepsilon$ and elements $u, v\in A$ with $u\neq v$ such that
    \begin{equation}\label{ineq: FLTP}
        (1-\varepsilon)\bigl(f_i(x)-f_i(y)+d(u,v)\bigr)\leq d(x,u)+d(y,v)
    \end{equation}
    for all $x,y\in M\setminus A$ and $i\in\{1,\ldots,n\}$.
\end{defn}
\noindent Clearly the FLTP follows from the seq-LTP. On the other hand, we show below that the metric spaces $M$ in Examples~\ref{Example2.7HOP} and \ref{ex: ell_1 subspace without seq-LTP} both have the FLTP but lack the seq-LTP.
It is possible to directly verify that the FLTP implies the LTP. However, one also obtains this fact from the following theorem together with \cite[Theorem 3.1]{MR3803112}.

\begin{thm}[cf. {\cite[Theorem 3.1]{MR3803112}} and {\cite[Theorem 3.3]{HOP2022}}]\label{thm:FLTP_SD2P}
     If $M$ has the FLTP, then $\Lip_0(M)$ has the SD2P.
\end{thm}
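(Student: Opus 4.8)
The plan is to verify the hypotheses of Lemma~\ref{lem: main lemma for detecting the SD2P} directly from the FLTP. So fix $0<\delta<1$, $n\in\N$, functions $h_1,\dots,h_n\in\Lip_0(M)$ with $\|h_i\|\le 1-\delta$, and a non-negative $\mu\in\ba$. First I would apply the FLTP with $\varepsilon$ a small parameter (to be fixed in terms of $\delta$ at the end), and with the functions $f_i$ in the definition of FLTP taken to be the given $h_i$ (these lie in $B_{\Lip_0(M)}$ since $\|h_i\|\le 1-\delta<1$). This produces a set $A\subseteq M$ with $\mu(\Gamma_A)<\varepsilon$ and points $u,v\in A$, $u\neq v$, such that
\begin{equation*}
(1-\varepsilon)\bigl(h_i(x)-h_i(y)+d(u,v)\bigr)\le d(x,u)+d(y,v)\qquad\text{for all }x,y\in M\setminus A,\ i.
\end{equation*}
The first required conclusion $\mu(\Gamma_A)<\delta$ then holds provided $\varepsilon\le\delta$.

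The core of the argument is to build, for each $i$, a function $f_i\in B_{\Lip_0(M)}$ extending $h_i|_{M\setminus A}$ and with $f_i(u)-f_i(v)\ge(1-\delta)d(u,v)$. The natural candidate is the largest $1$-Lipschitz extension of the function defined on $(M\setminus A)\cup\{v\}$ that equals $h_i$ on $M\setminus A$ and takes a prescribed large value at $v$; equivalently, one defines a function $\varphi_i$ on $(M\setminus A)\cup\{u,v\}$ by keeping $h_i$ on $M\setminus A$, setting $\varphi_i(v)$ equal to (say) $\inf_{y\in M\setminus A}\bigl(h_i(y)+d(y,v)\bigr)$ shifted appropriately, and $\varphi_i(u)=\varphi_i(v)+(1-\varepsilon')d(u,v)$ for a suitable $\varepsilon'$, then extends by the McShane--Whitney formula $f_i(t)=\inf_{s}\bigl(\varphi_i(s)+d(s,t)\bigr)$ over the domain of $\varphi_i$. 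One then needs to check three things: that $\varphi_i$ is $1$-Lipschitz on its domain (so the extension is genuinely $1$-Lipschitz and $f_i\in B_{\Lip_0(M)}$), that $f_i$ still agrees with $h_i$ on $M\setminus A$ (which follows once $\varphi_i$ restricted to $M\setminus A$ is already $h_i$ and the added values at $u,v$ do not decrease $f_i$ below $h_i$ on $M\setminus A$), and that the gap $f_i(u)-f_i(v)$ is at least $(1-\delta)d(u,v)$. The Lipschitz condition between the new points and the old ones is exactly what inequality~\eqref{ineq: FLTP} is designed to give: $|\varphi_i(v)-h_i(y)|\le d(y,v)$ and $|\varphi_i(u)-h_i(x)|\le d(x,u)$ reduce, after unwinding the choice of $\varphi_i(v)$, to~\eqref{ineq: FLTP} together with $\|h_i\|\le 1-\delta\le 1$; the Lipschitz condition between $u$ and $v$ is arranged by the choice $\varphi_i(u)-\varphi_i(v)=(1-\varepsilon')d(u,v)\le d(u,v)$.

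The bookkeeping step is to chase the constants: one wants $\varepsilon$ and $\varepsilon'$ chosen small enough (in terms of $\delta$ and, a priori, of the quantities $d(u,v)$ and $\inf h_i+\dots$, but ideally only of $\delta$) that simultaneously $\mu(\Gamma_A)<\delta$, $\|\varphi_i\|\le 1$, and $f_i(u)-f_i(v)\ge(1-\delta)d(u,v)$. Since~\eqref{ineq: FLTP} carries the factor $(1-\varepsilon)$ and the extension introduces another loss through $\varepsilon'$, a clean way is to first run FLTP with parameter $\varepsilon$ and afterwards pick $\varepsilon'$ depending on $\varepsilon$; choosing $\varepsilon=\varepsilon(\delta)$ sufficiently small then makes all three inequalities hold. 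I expect the main obstacle to be precisely this: defining the value of $\varphi_i$ at the new point $v$ in a way that simultaneously respects $1$-Lipschitzness against \emph{every} $y\in M\setminus A$ (this is where FLTP enters, but one must be careful that the same $A,u,v$ work for all $i$ at once, which the definition of FLTP does provide), and then not losing agreement with $h_i$ on $M\setminus A$ when taking the inf-convolution extension. Once the right $\varphi_i$ is written down, Lemma~\ref{lem: main lemma for detecting the SD2P} closes the proof.
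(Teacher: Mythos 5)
Your strategy is the same as the paper's: reduce to Lemma~\ref{lem: main lemma for detecting the SD2P}, invoke the FLTP to produce $A,u,v$, and realize each $f_i$ as a McShane--Whitney extension of $h_i|_{M\setminus A}$ with values at $u$ and $v$ prescribed so that \eqref{ineq: FLTP} yields both $1$-Lipschitzness and the gap $f_i(u)-f_i(v)\ge(1-\delta)d(u,v)$. However, the one concrete anchoring you write down is in the wrong direction and would fail. You set $\varphi_i(v)=\inf_{y\in M\setminus A}\bigl(h_i(y)+d(y,v)\bigr)$, which is the \emph{largest} value at $v$ compatible with $h_i$ on $M\setminus A$, and then place $\varphi_i(u)$ strictly above it; in general there is then no room left at $u$. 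For instance, if some $x_0\in M\setminus A$ with $d(x_0,u)=d(x_0,v)$ attains that infimum, then $\varphi_i(u)-h_i(x_0)=d(x_0,u)+(1-\varepsilon')d(u,v)>d(x_0,u)$, so $\varphi_i$ is not $1$-Lipschitz. The phrase ``shifted appropriately'' is carrying the whole construction. The correct choice (the paper's) is the \emph{maximal} compatible value at $u$, namely $f_i(u)=\inf_{x\in M\setminus A}\bigl(h_i(x)+d(x,u)\bigr)$, and the \emph{minimal} compatible value at $v$ (the lower McShane extension over $(M\setminus A)\cup\{u\}$, which is then also used on the rest of $A$); with that choice $\|f_i\|\le1$ is automatic and $f_i(u)-f_i(v)$ equals either $d(u,v)$ or $\inf_{x,y\in M\setminus A}\bigl(h_i(x)+d(x,u)-h_i(y)+d(y,v)\bigr)$, so the gap estimate is exactly an instance of \eqref{ineq: FLTP}.

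A second, more subtle issue: applying the FLTP to the functions $h_i$ themselves, with $\varepsilon$ merely small in terms of $\delta$, does not suffice, contrary to your bookkeeping plan. The inequality ultimately needed is $d(x,u)+d(y,v)\ge h_i(y)-h_i(x)+(1-\delta)d(u,v)$ for all $x,y\in M\setminus A$, in which the difference $h_i(y)-h_i(x)$ occurs with coefficient $1$ and with the sign \emph{opposite} to that in \eqref{ineq: FLTP} applied to $+h_i$; when $h_i(y)-h_i(x)$ is large and positive, \eqref{ineq: FLTP} for $+h_i$ gives only a vacuous lower bound on $d(x,u)+d(y,v)$, and the factor $(1-\varepsilon)$ in front of the function-difference term cannot be absorbed uniformly because $d(x,y)$ is unbounded. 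The remedy is to feed the FLTP the family $\pm h_i/(1-\delta)\in B_{\Lip_0(M)}$ (legitimate since $\|h_i\|\le1-\delta$) with $\varepsilon=\delta$; then \eqref{ineq: FLTP} becomes precisely $\pm\bigl(h_i(x)-h_i(y)\bigr)+(1-\delta)d(u,v)\le d(x,u)+d(y,v)$, which is what the gap estimate requires. (The paper is silent about this normalization, but it is needed there too.) With these two corrections your argument closes exactly as the paper's does.
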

\begin{proof}
 We use Lemma~\ref{lem: main lemma for detecting the SD2P}. Let $0<\delta<1$, $n\in \mathbb{N}$, $h_1,\dotsc, h_n\in \Lip_0(M)$ with $\|h_i\|\leq 1-\delta$ for every $i\in \{1,\ldots, n\}$, and let $\mu\in \ba$ with only non-negative values. By Lemma~\ref{lem: main lemma for detecting the SD2P},
it suffices to find a subset $A$ of $M$ and functions $f_1,\dotsc,f_n\in\Lip_0(M)$
satisfying the conditions in that lemma.

Let subset $A$ of $M$ and points $u, v\in A$ satisfy the conditions in Definition \ref{def:FLTP} with $\varepsilon=\delta$. We may and do assume that $0\notin A$. Fix $i \in \{1,\dotsc, n\}$ and define $f_i\colon M\to \mathbb{R}$ by $f_i|_{M\setminus A} = h_i|_{M\setminus A}$,
\begin{align*}
    f_i(u) = \inf_{x\in M\setminus A}\bigl(f_i(x)+d(x,u)\bigr),
\end{align*}
and for all $y\in A\setminus \{u\}$,
\begin{align*}
    f_i(y) = \sup_{x\in (M\setminus A)\cup \{u\}}\bigl(f_i(x)-d(x,y)\bigr).
\end{align*}
Since $\|f_i\|\leq 1$, it remains to show that $f_i(u)-f_i(v)\geq (1-\delta)d(u,v)$. If $f_i(v) = f_i(u)-d(u,v)$, then the inequality holds. Otherwise,
\begin{align*}
f_i(u)-f_i(v) &= \inf_{x,y\in M\setminus A}\big(f_i(x)+d(x,u)-f_i(y)+d(y,v)\big)\geq (1-\delta) d(u,v).
\end{align*}
\end{proof}

Next, we give a sufficient condition for the space $\Lip_0(M)$ to have the SSD$2$P. We use this condition to prove Proposition \ref{prop:l1_subset_SSD2P}, which shows that for all infinite metric subspaces $M$ of $\ell_1$, which do not necessarily have the seq-(S)LTP (see Example \ref{ex: ell_1 subspace without seq-LTP}), the space $\Lip_0(M)$ has the SSD$2$P.

\begin{defn}[cf. {\cite[Definition 1.3]{MR4026495}} and {\cite[Definition 2.2]{HOP2022}}]\label{def:FSLTP}
    We say that $M$ has the \emph{function SLTP} (\emph{FSLTP}) if, given $\mu\in\ba$ with only non-negative values, $\varepsilon>0$, $n\in \mathbb{N}$, and $f_1,\ldots,f_n\in B_{\Lip_0(M)}$, there exist a subset $A$ of $M$ with $\mu(\Gamma_A)<\varepsilon$ and elements $u, v\in A$ with $u\neq v$ such that the inequalities (\ref{ineq: FLTP}) and
 \begin{equation}\label{ineq: FSLTP}
     \begin{aligned}
    &(1-\varepsilon)\bigl(f_i(x)-f_i(y)+f_j(z)-f_j(w)+2d(u,v)\bigr)\\
    &\qquad\qquad \qquad \leq d(x,u)+d(y,u)+d(z,v)+d(w,v)
    \end{aligned}
\end{equation}
hold for all $x,y,z,w \in M\setminus A$ and $i,j\in\{1,\ldots,n\}$.
\end{defn}

\noindent Clearly the FLTP follows from the FSLTP which in turn follows from the seq-SLTP. On the other hand, the metric space $M$ in Example~\ref{Example2.7HOP} has the FLTP but lacks the FSLTP, while the metric space $M$ in Example~\ref{ex: ell_1 subspace without seq-LTP} has the FSLTP but lacks the seq-SLTP and even the seq-LTP. It is possible to directly verify that the FSLTP implies the SLTP. However, one can also obtain this fact from the following theorem together with \cite[Theorem 2.1]{MR4026495}.

\begin{thm}[cf. {\cite[Theorem~2.1]{MR4026495}} and {\cite[Theorem~2.3]{HOP2022}}]\label{thm:FSLTP_SSD2P}
     If $M$ has the FSLTP, then $\Lip_0(M)$ has the SSD2P.
\end{thm}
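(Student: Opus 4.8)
The plan is to verify the hypotheses of Lemma~\ref{lem: main lemma for detecting the SSD2P}, in parallel with the proof of Theorem~\ref{thm:FLTP_SD2P}. Fix $0<\delta<1$, $n\in\N$, and $h_1,\dotsc,h_n\in\Lip_0(M)$ with $\|h_i\|\le 1-\delta$ for every $i$, and let $\mu\in\ba$ have only non-negative values. Since $\|h_i\|\le 1-\delta<1$ we have $h_1,\dotsc,h_n\in B_{\Lip_0(M)}$, so we may invoke the FSLTP of $M$ with $\mu$, $\varepsilon=\delta$, $n$, and $h_1,\dotsc,h_n$: this provides a subset $A$ of $M$ with $\mu(\Gamma_A)<\delta$ and elements $u,v\in A$, $u\ne v$, satisfying \eqref{ineq: FLTP} and \eqref{ineq: FSLTP} for all $x,y,z,w\in M\setminus A$ and all $i,j$. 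As in the proof of Theorem~\ref{thm:FLTP_SD2P}, we may assume $0\notin A$. By Lemma~\ref{lem: main lemma for detecting the SSD2P}, it then suffices to find $f_1,\dotsc,f_n,g\in\Lip_0(M)$ with $f_i|_{M\setminus A}=h_i|_{M\setminus A}$ for all $i$, with $g|_{M\setminus A}=0$ and $\|g\|\ge 1-\delta$, and with $\|f_i\pm g\|\le 1$ for all $i$.

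First I would construct $g$. Putting $x=y$ and $z=w$ in \eqref{ineq: FSLTP} yields $(1-\delta)d(u,v)\le d(x,u)+d(z,v)$ for all $x,z\in M\setminus A$, and hence $d(u,M\setminus A)+d(v,M\setminus A)\ge(1-\delta)d(u,v)$. Choose $\rho_u,\rho_v\ge 0$ with $\rho_u\le d(u,M\setminus A)$, $\rho_v\le d(v,M\setminus A)$ and $\rho_u+\rho_v=(1-\delta)d(u,v)$ (for instance, $\rho_u=\min\{d(u,M\setminus A),(1-\delta)d(u,v)\}$ and $\rho_v=(1-\delta)d(u,v)-\rho_u$). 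The function on $(M\setminus A)\cup\{u,v\}$ that vanishes on $M\setminus A$ and equals $\rho_u$ at $u$ and $-\rho_v$ at $v$ is $1$-Lipschitz, so it admits a $1$-Lipschitz extension $g$ to $M$ (e.g.\ by the McShane formula). Since $0\notin A$, we have $g\in\Lip_0(M)$ and $g|_{M\setminus A}=0$, while $\|g\|\ge\bigl(g(u)-g(v)\bigr)/d(u,v)=1-\delta$.

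It remains to construct the $f_i$. Each $f_i$ must extend $h_i|_{M\setminus A}$ in such a way that $f_i+g$ and $f_i-g$ are \emph{both} $1$-Lipschitz on $M$; equivalently, $f_i-g$ must be a $1$-Lipschitz extension of $h_i|_{M\setminus A}$ that stays $1$-Lipschitz after adding $2g$. I would obtain such an $f_i$ by a nested McShane-type construction on $A$: first fix the values of $f_i\pm g$ at $u$ and at $v$ to be the extremes allowed by $1$-Lipschitzness relative to $M\setminus A$ and by the prescribed jump $g(u)-g(v)=(1-\delta)d(u,v)$, and then extend $f_i$ over $A\setminus\{u,v\}$. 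That these prescriptions are consistent — that the corresponding intervals of admissible values are non-empty — is precisely what \eqref{ineq: FLTP} ensures for pairs having an endpoint in $\{u,v\}$ and what \eqref{ineq: FSLTP} ensures for pairs straddling both $u$ and $v$ and for the simultaneous behaviour of $f_i+g$ and $f_i-g$; both are used with $\varepsilon=\delta$, exactly as \eqref{ineq: FLTP} was used in the proof of Theorem~\ref{thm:FLTP_SD2P}. With $f_1,\dotsc,f_n,g$ so constructed, Lemma~\ref{lem: main lemma for detecting the SSD2P} applies and $\Lip_0(M)$ has the SSD$2$P.

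I expect the construction of the $f_i$, together with the verification that \eqref{ineq: FSLTP} genuinely leaves enough room for $g$, to be the main obstacle. Whereas in Theorem~\ref{thm:FLTP_SD2P} a single McShane extension of $h_i|_{M\setminus A}$ suffices and $u,v$ play symmetric roles, here the two functions $f_i+g$ and $f_i-g$ must be handled at the same time, with $g$ common to all indices $i$; this couples the modification of $f_i$ near $u$ with that near $v$, so that having room at $u$ and at $v$ separately does not suffice. This is exactly why the four-point inequality \eqref{ineq: FSLTP} — rather than the two-point inequality \eqref{ineq: FLTP} alone — is needed, and checking that it supplies the required coupled room is the technical heart of the argument.
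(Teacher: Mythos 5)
Your overall strategy (Lemma~\ref{lem: main lemma for detecting the SSD2P} plus the FSLTP applied to $h_1,\dotsc,h_n$ with $\varepsilon=\delta$, then McShane-type extensions) is the paper's strategy, but there is a genuine gap at the decisive point: the choice of the bump heights of $g$. You fix $\rho_u,\rho_v$ using only the degenerate case $x=y$, $z=w$ of \eqref{ineq: FSLTP}, i.e.\ using only $d(u,M\setminus A)$ and $d(v,M\setminus A)$, and you commit to $g$ before looking at the functions $h_i$. But the constraint $\|f_i\pm g\|\le 1$ together with $f_i|_{M\setminus A}=h_i|_{M\setminus A}$ forces, for every $x,y\in M\setminus A$,
\[
h_i(y)-d(y,u)+\rho_u\ \le\ f_i(u)\ \le\ h_i(x)+d(x,u)-\rho_u,
\]
so an admissible value $f_i(u)$ exists only if
\[
\rho_u\ \le\ \tfrac12\inf\bigl\{d(x,u)+d(y,u)-\bigl(h_i(x)-h_i(y)\bigr)\colon x,y\in M\setminus A,\ i\bigr\},
\]
a quantity that can be strictly smaller than $d(u,M\setminus A)$ (take $x,y$ close to $u$ with $h_i(x)-h_i(y)$ near $(1-\delta)d(x,y)$). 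With your $\rho_u=\min\{d(u,M\setminus A),(1-\delta)d(u,v)\}$ the interval of admissible values of $f_i(u)$ can therefore be empty, and no ``nested McShane-type construction'' can rescue it, because $g$ is already fixed. Your closing paragraph correctly names this coupling as the technical heart, but the proposal never actually uses \eqref{ineq: FSLTP} with general $x,y,z,w$, which is exactly what is needed to resolve it.

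The paper's proof differs precisely here: it first defines the $h_i$-dependent quantities $r_0$ and $s_0$ (the halved infima above, at $u$ and at $v$ respectively), observes that the full four-point inequality \eqref{ineq: FSLTP} gives $r_0+s_0\ge(1-\delta)d(u,v)$, and only then splits $(1-\delta)d(u,v)=r+s$ with $r\le r_0$, $s\le s_0$ and builds $g$ with bump heights $r$ at $u$ and $-s$ at $v$ (the mixed $u$--$v$ constraints being handled by \eqref{ineq: FLTP}, which yields $h_i(x)+d(x,u)-h_i(y)+d(y,v)\ge r+s$), following the construction of \cite[Theorem~2.3]{HOP2022}. To repair your argument, replace your distance-only choice of $\rho_u,\rho_v$ by this function-dependent choice of $r,s$ before constructing $g$; the rest of your outline then goes through.
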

\begin{proof}
    We use Lemma~\ref{lem: main lemma for detecting the SSD2P}. Let $0<\delta<1$, $n\in \mathbb{N}$, $h_1,\dotsc, h_n\in \Lip_0(M)$ with $\|h_i\|\leq 1-\delta$ for every $i\in \{1,\ldots, n\}$, and let $\mu\in \ba$ with only non-negative values. By Lemma~\ref{lem: main lemma for detecting the SSD2P},
it suffices to find a subset $A$ of $M$ and functions $f_1,\dotsc,f_n,g\in\Lip_0(M)$
satisfying the conditions in that lemma.

Let subset $A$ of $M$ and points $u, v\in A$ satisfy the conditions in Definition \ref{def:FSLTP} with $\varepsilon=\delta$. We may and do assume that $0\notin A$. Setting
\[
r_0=\frac{1}{2}\inf\bigl\{d(x,u)+d(y,u)-\big(h_i(x)-h_i(y)\big)\colon x,y\in M\setminus A,\ i\in \{1,\dotsc,n\}\bigr\}
\]
and
\[
s_0=\frac{1}{2}\inf\bigl\{d(z,v)+d(w,v)-\big(h_i(z)-h_i(w)\big)\colon z,w\in M\setminus A,\ i\in \{1,\dotsc,n\}\bigr\}
\]
one has $r_0+s_0\geq(1-\delta)d(u,v)$. Thus, there exist $r,s\geq 0$ with $r \leq  r_0$ and $s\leq s_0$ such that
\[
r+s=(1-\delta) d(u,v).
\]
We may assume that $r>0$. By defining $f_1,\dotsc, f_n, g\in B_{\Lip_0(M)}$ as in the proof of {\cite[Theorem~2.3]{HOP2022}} (cf. \cite[Proof of Theorem~2.1]{MR4026495}) and following the rest of that proof nearly word-for-word, it follows that $\|f_i\pm g\|\leq 1$ for all $i\in \{1,\dotsc,n\}$. Let us pinpoint the differences with that proof. From our definitions of $r$ and $s$, it follows immediately that for any $x,y\in M\setminus A$ and $i\in \{1,\dotsc,n\}$,
\[
 h_i(x)+d(x,u)-\bigl( h_i(y)-d(y,u)\bigr)\geq 2r,
\]
and for any $z,w\in M\setminus A$ and $i\in \{1,\dotsc,n\}$,
\[
 h_i(z)+d(z,v)-\bigl( h_i(w)-d(w,v)\bigr)\geq 2s.
\]
Additionally, for any $x,y\in M\setminus A$ and $i\in \{1,\dotsc,n\}$, we have
\begin{align*}
 h_i(x)+d(x,u)-\bigl( h_i(y)-d(y,v)\bigr)\geq (1-\delta)d(u,v)= r+s.
\end{align*}
\end{proof}

\section{Separating diameter two properties from their weak-star counterparts in spaces of Lipschitz functions}\label{sec: w*-d2p and d2p separation}
In this section, we show that there exist spaces of Lipschitz functions, and thus dual Banach spaces, with the $w^*$-SSD$2$P but without the SSD$2$P. To our knowledge, these are first known examples of such spaces, thus answering {\cite[Question 6.2]{MR3917942}}.

Let $\mathcal{U}$ be a free ultrafilter on $\mathbb{N}$. We make use of the following observation: given a Banach space $X$ and a sequence $x_n\in B_X$, it is straightforward to verify that the continuous linear function defined by
\[
 X^* \ni f\longmapsto\lim_{\mathcal{U}} f(x_n)\in \mathbb R,
\]
has a norm at most $1$.

We start with the metric space $M$ introduced in {\cite[Example 2.7]{HOP2022}}, where it was shown that this space has the SLTP but not the seq-LTP. Consequently, $\Lip_0(M)$ has the $w^*$-SSD$2$P. However, it remained open whether the space $\Lip_0(M)$ has the SD$2$P or even the SSD$2$P. We prove that $\Lip_0(M)$ has the SD$2$P but not the SSD$2$P.

\begin{ex}[{\cite[Example 2.7]{HOP2022}}]\label{Example2.7HOP}
Let $M = \{a_k, b_k, c_k\colon k\in \N\}$ be a metric space where for every $k\in \N$,
\[d(a_k, c_k)= 2,\]
for all $k,l\in \N$ with $k<l$, 
\[d(a_k, b_l) = d(b_k, b_l) = d(c_k, b_l)=2,\] and the distance between two different elements is equal to $1$ in all other cases (see Figure~\ref{Figure_1}).

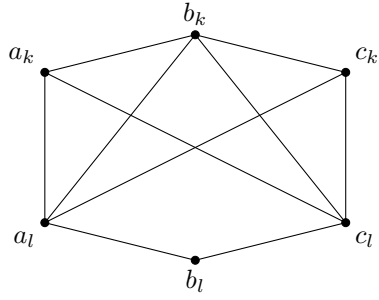
\begin{figure}[ht]
\begin{tikzpicture}
\filldraw[black] (-2, 0) circle (1.5pt) node[below left] {$a_l$};
\filldraw[black] (0, -0.5) circle (1.5pt) node[below] {$b_l$};
\filldraw[black] (2, 0) circle (1.5pt) node[below right] {$c_l$};
\filldraw[black] (-2, 2) circle (1.5pt) node[above left] {$a_k$};
\filldraw[black] (0, 2.5) circle (1.5pt)node[above] {$b_k$};
\filldraw[black] (2, 2) circle (1.5pt) node[above right] {$c_k$};
\draw (-2, 0) -- (0,-0.5);
\draw (-2, 0) -- (-2,2);
\draw (-2, 0) -- (0,2.5);
\draw (-2, 0) -- (2,2);
\draw (2, 0) -- (0,-0.5);
\draw (2, 0) -- (-2,2);
\draw (2, 0) -- (0,2.5);
\draw (2, 0) -- (2,2);
\draw (-2, 2) -- (0,2.5);
\draw (2, 2) -- (0,2.5);
\end{tikzpicture}

\caption{A representation of the metric space M in Example~\ref{Example2.7HOP}. The
distances between points connected by a straight line segment are 1, the
distances between other different points are 2.
} \label{Figure_1}
\end{figure}

First, we show that the space $\Lip_0(M)$ does not have the SSD2P.
Let $0 = b_1$. Define
\[F= m_{a_1,c_1}\in S_{\Lip_0(M)^*}\]
and $G\in B_{\Lip_0(M)^*}$ by 
\[G \colon f \longmapsto \lim_{\mathcal{U}}f(m_{a_{2n-1},a_{2n}}),\]
where $\mathcal{U}$ is a free ultrafilter on $\mathbb{N}$.
Note that $G\in S_{\Lip_0(M)^*}$ because for $f\in S_{\Lip_0(M)}$, defined by
\[
    f(x)=
    \begin{cases}
    1,&x=a_{2n-1},\; n\in\mathbb{N};\\
    0,& \text{otherwise},
    \end{cases}
\]
we have $G(f)=1$.

Let $0<\alpha<\frac{1}{11}$ and let $S_1=S(F,\alpha)$, $S_2=S(G,\alpha)$ be slices of $B_{\Lip_0(M)}$. Assume by contradiction that $\Lip_0(M)$ has the SSD2P. Let $f_1\in S_1$, $f_2\in S_2$, and $g\in B_{\Lip_0(M)}$ with $\|g\|\geq 1-\alpha$ be such that $f_i\pm g \in S_i$ for $i\in \{1,2\}$. Then for any $i\in \{1,2\}$,
\begin{equation}\label{ineq: f_i pm g leq dxy}
    |f_i(x)-f_i(y)|+|g(x)-g(y)|\leq d(x,y)\qquad \text{for all $x,y\in M$.}
\end{equation}
Observe from $f_1\in S_1$ that 
\[
    f_1(a_1) > 1- 2\alpha\quad \text{and}\quad  f_1(c_1) < -1+ 2\alpha.
\] 
It follows from \eqref{ineq: f_i pm g leq dxy} with $i=1$ that for all $n \geq 2$,
\[
    |f_1(a_n)| < 2\alpha\quad \text{and}\quad  |f_1(c_n)| < 2\alpha,
\] 
and that for all $n\in \mathbb{N}$,
\[
     |g(a_n)| < 2\alpha\quad \text{and}\quad  |g(c_n)| < 2\alpha.
\]
Consequently, since $\|g\|\geq 1-\alpha$, there exists $k \in \mathbb{N}$ such that $|g(b_k)| \geq 1 - 3\alpha$. From \eqref{ineq: f_i pm g leq dxy} with $i=2$ we have that for all $l\geq k$,
\begin{align*}
    |f_2(a_l)-f_2(a_{l+1})|&\leq |f_2(a_l)-f_2(b_{k})|+|f_2(a_{l+1})-f_2(b_{k})| \\
    &\leq 2 -|g(a_l)-g(b_{k})|-|g(a_{l+1})-g(b_{k})| < 10\alpha. 
\end{align*} 
Therefore, $f_2\notin S_2$ because
\[
G(f_2)\leq \limsup_n f_2(m_{a_{2n-1},a_{2n}})\leq 10\alpha<1-\alpha.
\]

Next, we show that the space $\Lip_0(M)$ has the SD2P.
   By Theorem \ref{thm:FLTP_SD2P} it suffices to show that $M$ has the FLTP. Fix $\mu\in\ba$ with only non-negative values, $\varepsilon>0$, $n\in\mathbb N$, and $f_1,\ldots,f_n\in B_{\Lip_0(M)}$. Note that if $|f_i(a_k)-f_i(c_k)|\geq 1$ for some $i\in \{1,\dotsc, n\}$ and $k\in \mathbb{N}$, then $|f_i(a_l)-f_i(c_l)|\leq 1$ for all $l\in \mathbb{N}\setminus\{k\}$. Thus, we can find $k\in \mathbb{N}$ so that $\mu(\Gamma_{\{a_k,b_k\}})<\varepsilon$ and
   \[|f_i(a_l)-f_i(c_l)|\leq 1\]
   
   for all $i\in\{1,\ldots,n\}$ and $l\geq k$. Let $A=\{a_k,b_k\}$, $u=a_k$, $v=b_k$, $x,y\in M\setminus A$, and $i\in \{1,\dotsc,n\}$. 
   
   It remains to show that inequality \eqref{ineq: FLTP} holds. If $f_i(x)-f_i(y)\leq 1$, then we are done. Assume now that $f_i(x)-f_i(y)>1$. It suffices to show that $d(x,u)=2$ or $d(y,v)=2$. If $d(y,v)=1$, then $y\in \{a_{l+1},c_l\colon l\geq k\}$. Since $f_i(x)-f_i(y)>1$, we have $d(x,y)=2$ and $x\notin \{a_l,c_l\colon l\geq k\}$ by our choice of $k$. Thus, $x=b_l$ for some $l>k$, and therefore $d(x,u)=2$.
\end{ex}

It is natural to wonder if there exists a metric space $M$ for which $\Lip_0(M)$ has the $w^*$-SD$2$P but lacks the SD$2$P. By \cite[Theorem 2.1 and Corollary 2.2]{MR3150166}, this question is equivalent to \cite[Problem 1.1]{MR4093788}(see also \cite[p.~1681]{MR3985517}). We solve this question by giving an example of a metric space $M$ for which $\Lip_0(M)$ has the $w^*$-SSD$2$P but lacks the slice-D$2$P.

\begin{ex}\label{Example_uus}
    Let $M= \{a_1, a_2, b_k, c_k\colon k\in \mathbb{N}\}$ be a metric space where for all $k,l\in \mathbb{N}$ with $k\leq l$,
    \begin{align*}
        d(a_1, b_{2k-1})=d(a_2,b_{2k}) = d(c_k, b_l) = 1,
    \end{align*}
    and the distance between two different elements is equal to $2$ in all other cases (see Figure~\ref{Figure_2}).
    \begin{figure}[ht]
\begin{tikzpicture}
\filldraw[black] (1.5, 3.5) circle (1.5pt) node[above] {$a_1$};
\filldraw[black] (4.5, 3.5) circle (1.5pt) node[above] {$a_2$};
\filldraw[black] (0, 2.5) circle (1.5pt) node[above left = -0.5mm] {$b_{2k-1}$};
\filldraw[black] (2, 2.5) circle (1.5pt)node[above left  = -0.5mm] {$b_{2k}$};
\filldraw[black] (4, 2.5) circle (1.5pt) node[above right  = -0.5mm] {$b_{2l-1}$};
\filldraw[black] (6, 2.5) circle (1.5pt)node[above right = -0.5mm] {$b_{2l}$};

\draw (1.5, 3.5) -- (0, 2.5);
\draw (1.5, 3.5) -- (4, 2.5);
\draw (4.5, 3.5) -- (2, 2.5);
\draw (4.5, 3.5) -- (6, 2.5);

\filldraw[black] (0, 1) circle (1.5pt) node[below] {$c_{2k-1}$};
\filldraw[black] (2, 1) circle (1.5pt) node[below] {$c_{2k}$};
\filldraw[black] (4, 1) circle (1.5pt) node[below] {$c_{2l-1}$};
\filldraw[black] (6, 1) circle (1.5pt)node[below] {$c_{2l}$};
\draw (0, 1) -- (0, 2.5);
\draw (0, 1) -- (2, 2.5);
\draw (0, 1) -- (4, 2.5);
\draw (0, 1) -- (6, 2.5);
\draw (2, 1) -- (2, 2.5);
\draw (2, 1) -- (4, 2.5);
\draw (2, 1) -- (6, 2.5);
\draw (4, 1) -- (4, 2.5);
\draw (4, 1) -- (6, 2.5);
\draw (6, 1) -- (6, 2.5);
\end{tikzpicture}
\caption{A representation of the metric space M in Example~\ref{Example_uus}. The
distances between points connected by a straight line segment are 1, the
distances between other different points are 2.
}\label{Figure_2}
\end{figure}
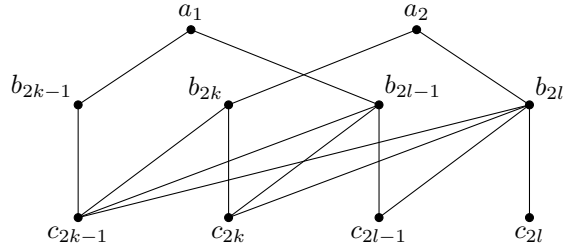

First, we verify that the space $\Lip_0(M)$ has the $w^*$-SSD$2$P. It suffices to show that $M$ has SLTP. Let $N$ be a finite subset of $M$. There exists $k\in\mathbb{N}$ such that $d(x,c_k)=2$ and $d(x,c_{k+1})=2$ for every $x\in N$. Take $u=c_k$ and $v=c_{k+1}$, and observe that inequalities \eqref{ineq: LTP} and \eqref{ineq: SLTP} hold for all $x,y,z,w\in N$. 

Next, we show that the space $\Lip_0(M)$ does not have the slice-D$2$P. Let $0=c_1$. Define $F\in B_{\Lip_0(M)^*}$ by 
\[
    F \colon f \longmapsto \lim_{\mathcal{U}}f(m_{b_{2n-1},b_{2n}}),
\]
where $\mathcal{U}$ is a free ultrafilter on $\mathbb{N}$, and let 
\[
    G=\frac12(m_{a_1,a_2}+F).
\]
Note that $G\in S_{\Lip_0(M)^*}$ because for $f\in S_{\Lip_0(M)}$, defined by
\[
    f(x)=\begin{cases}1,&x=a_1 \text{ or } x=b_{2n-1},\; n\in\mathbb{N};\\
    -1,& x=a_2 \text{ or } x=b_{2n},\; n\in\mathbb{N};\\
    0,&x=c_n,\; n\in\mathbb{N},
    \end{cases}
\]
we have $G(f)=1$. 

Let $0<\varepsilon\leq \alpha<\frac{1}{13}$ and let $S=S(G,\alpha)$ be a slice of $B_{\Lip_0(M)}$. Let $f\in S$. Since $F(f)>1-2\alpha$, we have $\limsup_n f(m_{b_{2n-1},b_{2n}})> 1-2\alpha$. This implies
\[
    \limsup_n f(b_{n})> 1-4\alpha \quad \text{and}\quad \liminf_n f(b_{n})< -1+4\alpha,
 \]
 and thus $|f(c_n)|< 4\alpha$ for all $n\in\mathbb{N}$. Since $m_{a_1,a_2}(f)>1-2\alpha$, we have  
    \[
    f(a_1)> 1-4\alpha \quad \text{and}\quad f(a_2) < -1+4\alpha.
    \]
    Hence, for every $n\in\mathbb{N}$,
    \[
    f(b_{2n-1})> -4\alpha \quad \text{and}\quad f(b_{2n})< 4\alpha.
    \]
    As these bounds hold for all $f\in S$, we have 
\[
    \diam(S)\leq  1+12\alpha<2-\varepsilon.
\]
    
\end{ex}

\section{Strong diameter two properties in spaces of Lipschitz functions over infinite metric subspaces of \texorpdfstring{$\ell_1$}{l1}}

In this section, we show that the space $\Lip_0(M)$ has the SSD$2$P for any infinite metric subspace $M$ of $\ell_1$. We first provide an example of such a metric space $M$ lacking the seq-LTP. Therefore, the seq-LTP (or the seq-SLTP) for $M$ is not necessary for $\Lip_0(M)$ to have the SD$2$P or the SSD$2$P. The question of whether all infinite metric subspaces of $\ell_1$ have the seq-(S)LTP was initially raised by Antonín Procházka in 2022 during the PhD thesis defense of the third author.

\begin{ex}\label{ex: ell_1 subspace without seq-LTP}
    The metric subspace $M=\{e_i+e_j\colon i,j\in \mathbb{N}\}$ of $\ell_1$ does not have the seq-LTP. 
    
    Let $A$ be a subset of $M$. Let $u = e_i+e_j,$ $v= e_k+e_l\in A$ with $u\neq v$ for some $i,j,k,l\in \mathbb{N}$. Assume that inequality \eqref{ineq: LTP} holds for all $x,y\in M\setminus A$. We consider two distinct cases to show that there is an $m\in \mathbb{N}$ for which there are at most two different $n\in\mathbb{N}$ such that $e_m+e_n\not\in A$. 
    
    If $d(u,v)=2$, then $i\in \{k,l\}$ or $j\in \{k,l\}$. Without loss of generality, assume that $i=k$. If $e_j+e_m\not\in A$ for some $m\in \mathbb{N}\setminus\{l\}$, then 
    \[
        d(u,e_j+e_m)=2,\quad d(v,e_l+e_n)= 2,\quad \text{and} \quad d(e_j+e_m,e_l+e_n)=4
    \] 
    for every $n\in \mathbb{N}\setminus\{m,j\}$.
    Therefore, $e_l+e_n\in A$ for all $n\in \mathbb{N}\setminus\{m,j\}$.
    
    If $d(u,v)=4$ and $e_j+e_m\not\in A$ for some $m\in \mathbb{N}$, then 
    \[
    d(u,e_j+e_m)=2\quad \text{and}\quad d(v,e_l+e_n)\leq 2
    \]
    for every $n\in \mathbb{N}\setminus\{m,j\}$. Thus, $e_l+e_n\in A$ for all $n\in \mathbb{N}\setminus\{m,j\}$.

    Now consider subsets $A_m$ of $M$ and $u_m,v_m\in A_m$ with $u_m\neq v_m$ and $m\in \{1,\dotsc,6\}$ that satisfy inequality \eqref{ineq: seq-LTP} for all $x,y\in M\setminus A_m$ and $m\in \{1,\dotsc,6\}$. By previous analysis, $A_1,\dotsc, A_6$ can not be pairwise disjoint. Therefore, $M$ does not have the seq-LTP.
\end{ex}

    To show that $\Lip_0(M)$ has the SSD$2$P for every infinite metric subspace $M$ of $\ell_1$, it is convenient to use the following result.

\begin{lemma}\label{lem: exists A with mu(A) < delta}
    Let $E$ be any set, $A_1,A_2,\dotsc$ subsets of $E$, and $\mu\in ba(E)$ with only non-negative values. If there exists $n\in \mathbb{N}$ such that for any pairwise different $m_1,\dotsc, m_n \in \mathbb{N}$, the intersection $A_{m_1}\cap\dotsb\cap A_{m_n}$ is empty, then for any $\delta>0$, there exists at most finitely many such $m\in \mathbb{N}$ that $\mu(A_m)\geq \delta$.
\end{lemma}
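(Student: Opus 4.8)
The plan is to argue by contradiction, turning the finite additivity of $\mu$ into a counting argument via the finite Boolean algebra generated by finitely many of the sets $A_m$. First I would record two preliminary observations: since $\mu$ takes only non-negative values, $\mu(E)=|\mu|(E)=\norm{\mu}<\infty$; and the hypothesis says exactly that every point of $E$ lies in at most $n-1$ of the sets $A_1,A_2,\dotsc$.

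Suppose, towards a contradiction, that there are infinitely many indices $m$ with $\mu(A_m)\ge\delta$. Fix an integer $N$ with $N\delta>(n-1)\mu(E)$ and choose pairwise distinct indices $m_1,\dotsc,m_N$ with $\mu(A_{m_k})\ge\delta$ for each $k$. For every $S\subseteq\{1,\dotsc,N\}$ set
\[
C_S=\bigcap_{k\in S}A_{m_k}\setminus\bigcup_{k\notin S}A_{m_k}.
\]
These $2^N$ sets are pairwise disjoint with union $E$, so finite additivity gives $\mu(E)=\sum_{S\subseteq\{1,\dotsc,N\}}\mu(C_S)$; likewise $A_{m_k}=\bigcup_{S\ni k}C_S$ is a disjoint union, so $\mu(A_{m_k})=\sum_{S\ni k}\mu(C_S)$. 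The key point is that $C_S=\emptyset$, and hence $\mu(C_S)=0$, whenever $\abs{S}\ge n$: this is precisely the hypothesis applied to the pairwise different indices $\{m_k\colon k\in S\}$.

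Summing over $k$ and interchanging the two finite sums then yields
\[
N\delta\le\sum_{k=1}^{N}\mu(A_{m_k})=\sum_{S\subseteq\{1,\dotsc,N\}}\abs{S}\,\mu(C_S)\le(n-1)\sum_{S\subseteq\{1,\dotsc,N\}}\mu(C_S)=(n-1)\mu(E),
\]
where the last inequality uses $\mu(C_S)\ge0$ together with $\mu(C_S)=0$ unless $\abs{S}\le n-1$. This contradicts the choice of $N$, so only finitely many $m$ satisfy $\mu(A_m)\ge\delta$ — in fact at most $\lfloor(n-1)\mu(E)/\delta\rfloor$ of them.

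The only delicate issue — mild, but worth flagging as the main obstacle — is that $\mu$ is merely finitely additive, so one cannot simply integrate the pointwise bound $\sum_k\mathbf{1}_{A_{m_k}}\le n-1$ against a countably additive measure. Passing through the finite atomic decomposition $\{C_S\}$ of a finite subfamily is exactly what makes the counting argument valid using only finite additivity on the power set, and it is why the statement is restricted to $\ba$-type measures rather than requiring $\sigma$-additivity.
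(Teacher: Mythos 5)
Your proof is correct. The atoms $C_S$ of the finite algebra generated by $A_{m_1},\dotsc,A_{m_N}$ are pairwise disjoint with union $E$, each $A_{m_k}$ is the disjoint union of the $C_S$ with $S\ni k$, and since $\mu$ is finitely additive on the whole power set all of these finitely many identities are legitimate; the hypothesis annihilates every atom with $\abs{S}\geq n$, and the double count $N\delta\leq\sum_S\abs{S}\,\mu(C_S)\leq(n-1)\mu(E)$ contradicts the choice of $N$. This is, however, a genuinely different route from the paper's. The paper also argues by contradiction, but instead of a one-step counting bound it runs an iterated pigeonhole: after normalizing $\mu(E)=1$ and fixing $k$ with $k\delta>1+\delta$, a Bonferroni-type estimate shows that among any $k$ consecutive members of the subsequence with $\mu(B_m)\geq\delta$ some pair intersects in measure at least $\delta_1=\delta/(2^k-k)$; this produces a new sequence of pairwise intersections (with disjoint index pairs) satisfying a uniform lower bound $\delta_1$, and repeating the argument finitely many times yields an intersection of at least $n$ pairwise different $A_m$'s with positive measure, contradicting the emptiness hypothesis. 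Your argument is shorter, avoids the bookkeeping needed to keep the index blocks disjoint through the iteration, and gives an explicit quantitative conclusion (at most $\lfloor(n-1)\mu(E)/\delta\rfloor$ indices $m$ can satisfy $\mu(A_m)\geq\delta$), while the paper's proof only ever manipulates pairwise intersections at each stage; both ultimately rest on the same ingredient, namely finite additivity applied to finitely many disjoint pieces, so nothing essential is lost by your more direct approach.
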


\begin{proof}
Assume by contradiction that for some $\delta>0$ there exists a subsequence $(B_m)$ of $(A_m)$ such that $\mu(B_m)\geq \delta$ for all $m\in \mathbb{N}$. 

We first show that there exist $\delta_1> 0$ and $i_m, j_m\in \mathbb{N}$ so that  $i_m<j_m<i_{m+1}$ and $\mu(B_{i_m}\cap B_{j_m})> \delta_1$ for all $m\in \mathbb{N}$. We may and do assume that $\mu(E)=1$. Choose $k\in \mathbb{N}$ so that $k\delta>1+\delta$, fix $m\in \mathbb{N}$, and let 
\[
\beta =\max\big\{\mu(B_{i}\cap B_{j})\colon i,j\in \{km+1,\dotsc, km+k\},\ i\neq j\big\}.
\] 
Observe that $\beta > \delta/(2^k-k)$ because
    \begin{align*}
        1=\mu(E)&\geq\mu(B_{km+1}\cup \dotsb \cup B_{km+k}) \\ 
        &\geq \sum_{i=1}^k \mu(B_{km+i})-(2^k-k)\beta \\
        &\geq k\delta-(2^k-k)\beta\\
        &>1+\delta -(2^k-k)\beta.
    \end{align*}
Taking $i_m,j_m\in \{km+1,\dotsc, km+k\}$ with $i_m< j_m$ so that $\mu(B_{i_m}\cap B_{j_m}) =\beta$, we have that $\mu(B_{i_m}\cap B_{j_m})> \delta_1$, where $\delta_1=\delta/(2^k-k)$.

Now let $C_m=B_{i_m}\cap B_{j_m}$ and note that $\mu(C_m)\geq \delta_1$ for all $m\in \mathbb{N}$. Replacing $B_m$ with $C_m$ and $\delta$ with $\delta_1$ in the previous argument, we get that there exist $\delta_2>0$ and $k_m,l_m\in \mathbb{N}$ so that $k_m<l_m<k_{m+1}$ and $\mu(C_{k_m}\cap C_{l_m})> \delta_2$ for all $m\in \mathbb{N}$. Repeating the same argument as many times as is needed, we get that there exist pairwise different $m_1,\dotsc,m_n\in \mathbb{N}$ such that $\mu(A_{m_1}\cap\dotsb \cap A_{m_n})>0$. However, by our assumption, $A_{m_1}\cap \dotsb \cap A_{m_n}=\emptyset$.
\end{proof}

\begin{prop}\label{prop:l1_subset_SSD2P}
    If $M$ is an infinite metric subspace of $\ell_1$, then $\Lip_0(M)$ has the SSD$2$P.
\end{prop}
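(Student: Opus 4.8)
The plan is to invoke Theorem~\ref{thm:FSLTP_SSD2P}, so that it suffices to show that every infinite metric subspace $M$ of $\ell_1$ has the FSLTP. Fix $\mu\in\ba$ with only non-negative values, which we may assume satisfies $\mu(\widetilde M)=1$, together with $\varepsilon>0$, $n\in\mathbb N$, and $f_1,\dots,f_n\in B_{\Lip_0(M)}$. Since $\|f_i\|\le 1$ forces $f_i(x)-f_i(y)\le d(x,y)$ and $f_j(z)-f_j(w)\le d(z,w)$, inequality~\eqref{ineq: FLTP} is implied by $(1-\varepsilon)\bigl(d(x,y)+d(u,v)\bigr)\le d(x,u)+d(y,v)$, and~\eqref{ineq: FSLTP} is implied by~\eqref{ineq: SLTP}; so the functions play no essential role and the goal reduces to producing a subset $A\subseteq M$ with $\mu(\Gamma_A)<\varepsilon$ together with $u\ne v$ in $A$ for which the purely metric inequalities~\eqref{ineq: LTP} and~\eqref{ineq: SLTP} hold for all $x,y,z,w\in M\setminus A$.

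To produce such an $A$ I would build an infinite family of candidates and apply Lemma~\ref{lem: exists A with mu(A) < delta}. That is, I would construct pairs $u_m\ne v_m$ in $M$ and sets $A_m\subseteq M$ with $u_m,v_m\in A_m$ such that (i)~\eqref{ineq: LTP} and~\eqref{ineq: SLTP} hold for all $x,y,z,w\in M\setminus A_m$ and all $m$, and (ii)~there is an integer $N_0$, independent of $m$, so that any $N_0$ of the $A_m$ have empty intersection. A short pigeonhole argument then upgrades~(ii) to the statement that any $2N_0-1$ of the sets $\Gamma_{A_m}$ have empty intersection, so Lemma~\ref{lem: exists A with mu(A) < delta} applied to $(\Gamma_{A_m})_m$ and $\mu$ yields $\mu(\Gamma_{A_m})<\varepsilon$ for all but finitely many $m$; taking $A=A_m$, $u=u_m$, $v=v_m$ for one such $m$ finishes the proof.

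The geometric engine is an elementary coordinatewise estimate in $\ell_1$: using that $|a-c|+|b-d|\ge |a-b|+|c-d|$ whenever $a=b$ or $c=d$, one checks that if $S=\supp(u-v)$ and $x,y,z,w$ each satisfy $\sum_{i\in S}|x_i|\le\eta$ (and likewise for $y,z,w$), then $d(x,u)+d(y,v)\ge d(x,y)+d(u,v)-C\eta$, with $C$ an absolute constant, and an analogous estimate holds on the right-hand side of~\eqref{ineq: SLTP}; since $d(x,y)+d(u,v)\ge d(u,v)$, both inequalities follow once $\eta$ is a small enough multiple of $\varepsilon\,d(u,v)$. This dictates the choice $A_m=\{u_m,v_m\}\cup\bigl\{p\in M:\sum_{i\in\supp(u_m-v_m)}|p_i|\ge\eta_m\bigr\}$ with $\eta_m$ a fixed small multiple of $\varepsilon\,d(u_m,v_m)$, which secures~(i) automatically; property~(ii) then holds provided the supports $\supp(u_m-v_m)$ are pairwise disjoint, the $\eta_m$ are bounded below, and $M$ is bounded, because then a point lying in $k$ of the $A_m$ has $\ell_1$-norm at least roughly $k\inf_m\eta_m$. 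After translating so that a base point of $M$ lies in $M$, the task therefore reduces to extracting from $M$ an infinite ``matching'' of pairs with pairwise disjoint difference-supports and mutual distances bounded below.

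The hard part is exactly this extraction, together with the configurations it does not cover directly: metric subspaces that cluster around a point (where no family of pairs can have both disjoint difference-supports and distances bounded below) and unbounded ``ray-like'' subsets such as $\{m e_1:m\in\mathbb N\}$ (where every difference has the same one-point support and the support-based $A_m$ degenerates to all of $M$). I expect the proof to proceed by a dichotomy on the structure of $M$ as a subset of $\ell_1$, obtained from a gliding-hump / weak$^*$-compactness extraction of a suitable sequence of points of $M$: in the clustered case one still runs the support argument, since a point close to $u_m$ cannot have small $S$-mass unless $d(u_m,v_m)$ is itself small, which collapses the bad set to a few points; in the remaining cases one falls back on the purely metric fact that~\eqref{ineq: LTP} and~\eqref{ineq: SLTP} hold whenever $\{u_m,v_m\}$ does not ``interleave'' with $M\setminus A_m$ on the relevant coordinates, and chooses the $A_m$ accordingly. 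Organizing these cases into a single argument — in particular verifying the bounded intersection multiplicity~(ii) uniformly — is where the real work lies.
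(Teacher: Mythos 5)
Your reduction is the same as the paper's: invoke Theorem~\ref{thm:FSLTP_SSD2P}, note that $f_i(x)-f_i(y)\le d(x,y)$ turns \eqref{ineq: FLTP} and \eqref{ineq: FSLTP} into the purely metric inequalities \eqref{ineq: LTP} and \eqref{ineq: SLTP}, build infinitely many candidate sets $A_m$ with uniformly bounded overlap, and feed them to Lemma~\ref{lem: exists A with mu(A) < delta}; your coordinatewise $\ell_1$ estimate and the pigeonhole passage from the $A_m$ to the $\Gamma_{A_m}$ are both fine. But the proposal stops exactly where the proof has to be done: the extraction of the pairs $(u_m,v_m)$ and sets $A_m$, and the treatment of the ``clustered'' and ``unbounded ray-like'' configurations, are explicitly deferred (``where the real work lies''), so what you have is a plan, not a proof. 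The missing idea that makes the rest routine is the first line of the paper's argument: if $M$ is unbounded or not uniformly discrete, then $\Lip_0(M)$ already has the SSD$2$P by \cite[Theorem~2.2]{MR4093788} (indeed $M$ has the seq-SLTP by \cite[Proposition~2.4]{HOP2022}). This disposes of precisely the configurations you flag as problematic and reduces everything to the case $r\le d(x,y)\le R$, where the distances $d(u_m,v_m)$ are automatically bounded below and the bounded-multiplicity count becomes a one-line computation.

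Even in the bounded, uniformly discrete case your engine is not quite right as stated: you require the \emph{exact} supports $\supp(u_m-v_m)$ to be pairwise disjoint, and this can be impossible. For instance, $M=\{(1/2+4^{-n})e_1+e_{n+1}\colon n\in\mathbb N\}$ is bounded and $2$-separated, yet every difference of two points has coordinate $1$ in its support, so no two difference supports are disjoint. The cure is the gliding hump you only allude to: work with disjoint \emph{blocks} of coordinates carrying almost all of the relevant mass up to a threshold $\sim\varepsilon r$, which is what the paper does with $I_m=\{K_{m-1},\dotsc,K_m-1\}$ and $A_m=\{x\in M\colon \sum_{k\in I_m}|x(k)|\ge \varepsilon r/2\}$, giving the multiplicity bound $2R/(\varepsilon r)$ and then \eqref{ineq: LTP} and \eqref{ineq: SLTP} from estimates of the form $\|x-u\|\ge(1-\varepsilon)(\|x\|+\|u\|)$ for $x\notin A_m$. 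So your geometric idea is in the right direction and a thresholded version of it can be made to work, but the case reduction via \cite{MR4093788}/\cite{HOP2022} and the actual block construction — the substance of the paper's proof — are absent from the proposal.
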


\begin{proof}
    If $M$ is unbounded or not uniformly discrete, then $\Lip_0(M)$ has the SSD$2$P by \cite[Theorem 2.2]{MR4093788} ($M$ even has the seq-SLTP by \cite[Proposition 2.4]{HOP2022}). Assume that $M$ is bounded and uniformly discrete, i.e. there exist $r,R> 0$ such that for all $x,y\in M$ with $x\neq y$, we have $r\leq d(x,y)\leq R$.
    Without loss of generality, we assume that $M$ contains the element $(0,0,\dotsc)$, which we assign as the $0$ of the pointed metric space $M$.
    
     By Theorem~\ref{thm:FSLTP_SSD2P}, it suffices to show that $M$ has the FSLTP. Let $\mu\in \ba$ with only non-negative values, $\varepsilon\in (0,1)$, $n\in \mathbb{N}$, $f_1,\dotsc, f_n\in B_{\Lip_0(M)}$, and let $K_0=1$. We inductively define $K_m\in \mathbb N$, $u_m,v_m\in M$, and a subset $A_m$ of $M$ for all $m\in \mathbb{N}$. Assume that we have $K_{m-1}\in \mathbb{N}$ for some $m\in\mathbb{N}$. Choose $K_{m}\in \mathbb{N}$ with $K_m>K_{m-1}$ and $u_m,v_m\in M\setminus\{0\}$ with $u_m\neq v_m$ so that 
    \[
    \sum_{k\in I_m} |u_m(k)|\geq \|u_m\|-\frac{\varepsilon r}{2}\quad \text{and} \quad \sum_{k\in I_m} |v_m(k)| \geq \|v_m\|-\frac{\varepsilon r}{2},
    \]
    where $I_m=\{K_{m-1},\dotsc, K_m-1\}$. Define
    \[
    A_m = \Big\{x\in M\colon \sum_{k\in I_m} |x(k)| \geq \frac{\varepsilon r}{2}\Big\}.
    \]

    Note that for every $x\in M$, 
    \[
    \big|\{m\in \mathbb N\colon x\in A_m\}\big|\leq \frac{2R}{\varepsilon r}.
    \]
     By Lemma~\ref{lem: exists A with mu(A) < delta}, there exist at most finitely many such $m\in \mathbb{N}$ that $\mu(\Gamma_{1,A_m})\geq \varepsilon/2$ or $\mu(\Gamma_{2,A_m})\geq \varepsilon/2$, where 
   \[
    \Gamma_{1,A_m}=\big\{(x,y)\in\widetilde{M}\colon x\in A_m\big\}
    \quad\text{and}\quad
    \Gamma_{2,A_m}=\big\{(x,y)\in\widetilde{M}\colon y\in A_m\big\}.
    \]
    Therefore, there exists $m\in \mathbb{N}$ such that $\mu(\Gamma_{A_m})<\varepsilon$. Fix such $m\in \mathbb{N}$ and let $A=A_m$, $u = u_m, v=v_m$, and $I=\{K_{m-1},\dotsc, K_m-1\}$. 

    Note that if inequalities \eqref{ineq: LTP} and \eqref{ineq: SLTP} hold for all $x,y,z,w\in M\setminus A$, then inequalities \eqref{ineq: FLTP} and \eqref{ineq: FSLTP} hold for all $x,y,z,w\in M\setminus A$ and $i,j\in \{1,\dotsc,n\}$ because for all $i\in \{1,\dotsc,n\}$ and $x,y\in M$,
     \[
        f_i(x)-f_i(y)\leq d(x,y).
     \]
    Let $x\in M\setminus A$. From 
    \begin{align*}
        \sum_{k\in I} |x(k)| < \frac{\varepsilon r}{2} \quad \text{and} \quad \sum_{k\in I} |u(k)| \geq \|u\|-\frac{\varepsilon r}{2}
    \end{align*}
    we have that
    \[
        \|x-u\|\geq\|x\| +\|u\|-2\varepsilon r\geq (1-\varepsilon)\big(\|x\|+\|u\|\big).
    \]
    Analogously, we get that $\|x-v\|\geq (1-\varepsilon)\big(\|x\|+\|v\|\big)$. It is now easy to check that inequalities \eqref{ineq: LTP} and \eqref{ineq: SLTP} hold for all $x,y,z,w\in M\setminus A$. Therefore, $M$ has the FSLTP.
\end{proof}

\section{Open questions}
We conclude the paper by stating some open questions. 

\begin{itemize}
    \item 
In section \ref{sec: w*-d2p and d2p separation}, we gave first known examples of dual Banach spaces with the $w^*$-SSD$2$P but without the SSD$2$P. It remains an open question whether such bidual spaces exist. This enquiry was kindly presented to us by Vegard Lima and Trond A. Abrahamsen.
\begin{qu}
    Is there a Banach space $X$ with the SSD$2$P such that $X^{**}$ lacks the SSD$2$P?
\end{qu}
\noindent To provide an affirmative answer, it would suffice to show that $\mathcal{F}(M)$ is a dual space for $M$ from Example \ref{Example2.7HOP} or \ref{Example_uus}. In fact, if the previous claim is true for $M$ from Example \ref{Example_uus}, then the predual of $\mathcal{F}(M)$ is an example of a Banach space $X$ with the SSD$2$P so that its bidual $X^{**}=\Lip_0(M)$ lacks the slice-D$2$P.
    \item 
A stronger property than the traditional octahedrality in Banach spaces, called decomposable octahedrality, was introduced by the third author in \cite{MR4233633}.
It was shown that a Banach space $X$ is decomposably octahedral whenever its dual space $X^*$ has the $w^*$-SSD$2$P, and that the converse implication holds if $X$ is a Lipschitz-free space. Whether this converse implication holds for every Banach space $X$ remains an open question.
Our study prompts a more concrete question.

\begin{qu}
Does $\Lip_0(M)$ have the SSD$2$P whenever $\Lip_0(M)^*$ is decomposably octahedral? 
\end{qu}
\noindent The metric space $M$ from Example~\ref{Example2.7HOP} could yield a negative answer.

    \item 
The $w^*$-SD$2$P and $w^*$-SSD$2$P in $\Lip_0(M)$ have convenient characterisations in the form of the LTP and the SLTP in $M$, respectively. It is natural to ask if there are similar metric characterisations for other diameter $2$ properties.
\begin{qu}
    Do all diameter 2 properties have metric descriptions for spaces of Lipschitz functions?
\end{qu}

    \item 
Finally, we recall a question posed in \cite[p.~3]{HOP2022}, which remains unanswered in this subsequent study.
\begin{qu}
Are the ($w^*$-)slice-D$2$P and the ($w^*$-)D$2$P distinct properties in spaces of Lipschitz functions?
\end{qu}
\noindent The answer to this question is known in the broader context of Banach spaces \cite{MR3334951}.
\end{itemize}

\section*{Acknowledgments}
This work was supported by the Estonian Research
Council grant (PRG1901).

The authors thank Trond A.~Abrahamsen, Vegard Lima, and Yo\"el Perreau for helpful comments on the content and presentation of this paper.

\bibliographystyle{amsplain}
\bibliography{references}

\end{document}